
\documentclass[11pt]{article}
\usepackage[utf8]{inputenc}

\usepackage{geometry} 
\geometry{a4paper}
\usepackage{graphicx} 

\usepackage{tabularx}
\newcolumntype{Z}{>{\centering\let\newline\\\arraybackslash\hspace{0pt}}X}
\usepackage{ulem}
\usepackage{booktabs} 
\usepackage{array} 
\usepackage{paralist} 
\usepackage{verbatim} 
\usepackage{subfig} 
\usepackage{hyperref}
\usepackage{amsmath}
\usepackage{amssymb}
\usepackage{authblk}
\usepackage{stmaryrd}
\usepackage{enumitem}
\setlist{nosep}
\usepackage{bbm}
\usepackage{mathtools}
\usepackage{textcomp}
\usepackage{xcolor}
\usepackage{listings}
\usepackage{float}
\usepackage{centernot}
\usepackage[mathscr]{euscript}
\usepackage{tikz-cd}
\usepackage{rotating}

\newcommand{\Set}{\mathbf{Set}}
\newcommand{\Setswith}[1]{\PSh(#1)}

\newcommand{\HOM}{\mathcal{H}\! \mathit{om}}

\DeclareMathOperator{\PSh}{\mathbf{PSh}}

\usepackage{amsthm}
\newtheorem{thm}{Theorem}

\newtheorem{proposition}[thm]{Proposition}

\newtheorem{corollary}[thm]{Corollary}

\theoremstyle{definition}

\theoremstyle{remark}

\usepackage{fancyhdr} 
\pagestyle{fancy} 
\lhead{}\chead{}\rhead{}
\lfoot{}\cfoot{\thepage}\rfoot{}

\usepackage{sectsty}
\allsectionsfont{\sffamily\mdseries\upshape} 


\usepackage[nottoc,notlof,notlot]{tocbibind} 
\usepackage[titles,subfigure]{tocloft} 



\let\theta\vartheta
\let\emph\textit

\date{\vspace{-2em}} 

\title{An essential, hyperconnected, local geometric morphism that is not locally connected}
\author{Jens Hemelaer \thanks{Department of Mathematics, University of Antwerp, 
 Middelheimlaan 1, B-2020 Antwerp (Belgium) \\ email: jens.hemelaer@uantwerpen.be} \\ Morgan Rogers \thanks{Universit\`a degli Studi dell{'}Insubria, Via Valleggio n. 11, 22100 Como CO \\ Marie Sklodowska-Curie fellow of the Istituto Nazionale di Alta Matematica \\ email: mrogers@uninsubria.it}}

\begin{document}

\maketitle


\begin{abstract}
We give an example of an essential, hyperconnected, local geometric morphism that is not locally connected, arising from our work-in-progress on geometric morphisms $\Setswith{M} \to \Setswith{N}$, where $M$ and $N$ are monoids.
\end{abstract}


Thomas Streicher asked on the category theory mailing list whether every essential, hyperconnected, local geometric morphism is automatically locally connected. We show that this is not the case, by providing a counterexample. Our counterexample arises from our earlier work \cite{hr1} and work-in-progress regarding properties of geometric morphisms $\Setswith{M} \to \Setswith{N}$ for monoids $M$ and $N$.

We thank Thomas Streicher for the interesting question and the subsequent discussion regarding this counterexample.

The second named author was supported in this work by INdAM and the Marie Sklodowska-Curie Actions as a part of the \textit{INdAM Doctoral Programme in Mathematics and/or Applications Cofunded by Marie Sklodowska-Curie Actions}.

\section*{The counterexample} \label{sec:example}

Let $M$ be the monoid with presentation $\langle{e,x : e^2 = e, xe = x}\rangle$. Note that each element of $M$ can be written as either $x^n$ or $ex^n$ for some $n \in \{0,1,2,\dots\}$. Further, let $N$ be the free monoid on one variable $a$, so $N = \{1, a, a^2,\dots \}$. Consider the monoid morphism $\phi: M \to N$ which on generators is given by $\phi(e)=1$ and $\phi(x) = a$. If we interpret $M$ and $N$ as categories, then $\phi$ is a functor. There is an induced essential geometric morphism
\begin{equation*}
\begin{tikzcd}
\Setswith{M} \ar[r,"{f}"] & \Setswith{N}
\end{tikzcd}
\end{equation*}
given by functors
\begin{equation*}
\begin{tikzcd}
\Setswith{M} \ar[r,"{f_*}"', bend right] \ar[r,"{f_!}", bend left] &
\Setswith{N} \ar[l,"{f^*}"]
\end{tikzcd}
\end{equation*}
with the following description, for $X$ in $\Setswith{M}$ and $Y$ in $\Setswith{N}$:
\begin{itemize}
\item $f_!(X) \simeq X \otimes_M N$ where $N$ has left $M$-action defined by $m \cdot n = \phi(m)n$ for $m \in M$ and $n \in N$, and right $N$-action defined by multiplication;
\item $f^*(Y) \simeq Y$ with right $M$-action defined as $y \cdot m = y \cdot \phi(m)$ for $y \in Y$ and $m \in M$;
\item $f_*(Y) \simeq \HOM_M(N,Y)$, where $N$ has right $M$-action given by $n \cdot m = n\phi(m)$ for $n \in N$ and $m \in M$, and $\HOM_M(N,Y)$ is the set of morphisms of right $M$-sets $g: N \to Y$; the right $N$-action on $\HOM_M(N,Y)$ is defined as $(g \cdot n)(n') = g(nn')$ for $g \in \HOM_M(N,Y)$ and $n,n' \in N$. 
\end{itemize}

For definitions and background regarding tensor products and Hom-sets, in the context of sets with a monoid action, we refer to \cite[Subsection 1.2]{hr1}.

\begin{proposition} \label{prop:hyperconnected}
The geometric morphism $f$ is hyperconnected.
\end{proposition}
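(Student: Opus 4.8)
The plan is to invoke the standard characterization of hyperconnectedness: a geometric morphism is hyperconnected if and only if its inverse image functor is full and faithful and its essential image is closed under subobjects and quotients (one of the equivalent formulations in Johnstone's \emph{Sketches of an Elephant}, A4.6). Since $f^{*}$, $f_{!}$ and $f_{*}$ are already given explicitly, with $f^{*}$ acting as restriction of scalars along $\phi$, each clause can be verified by hand, and the only structural input I expect to use is that $\phi\colon M\to N$ is surjective --- immediate, since $\phi(x^{n})=a^{n}$ ranges over all of $N$.

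First I would check that $f^{*}$ is full and faithful. Faithfulness is automatic, because $f^{*}$ leaves underlying sets and underlying functions untouched, reinterpreting the right $N$-action as a right $M$-action through $\phi$; distinct morphisms therefore stay distinct. For fullness, take right $N$-sets $Y_{1},Y_{2}$ and a morphism $h\colon f^{*}(Y_{1})\to f^{*}(Y_{2})$ of right $M$-sets. Its equivariance condition is $h(y\cdot\phi(m))=h(y)\cdot\phi(m)$, and since every $n\in N$ is of the form $\phi(m)$ we get $h(y\cdot n)=h(y)\cdot n$ for all $n$, so $h$ is already a morphism of $N$-sets. Equivalently, the unit $\id\to f_{*}f^{*}$ is an isomorphism.

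Next I would identify the essential image of $f^{*}$ with the full subcategory of those right $M$-sets $X$ whose action factors through $\phi$, i.e.\ for which $x\cdot m=x\cdot m'$ whenever $\phi(m)=\phi(m')$. Every $f^{*}(Y)$ visibly has this property, and conversely surjectivity of $\phi$ lets one transport such an $X$ to a well-defined right $N$-set, exhibiting it as $f^{*}$ of an $N$-set. The remaining task is to see that this subcategory is closed under subobjects and quotients in $\Setswith{M}$: a subobject $S\hookrightarrow X$ inherits the factoring property by restriction, and for a quotient $X\twoheadrightarrow Q$, the equality $x\cdot m=x\cdot m'$ in $X$ (when $\phi(m)=\phi(m')$) forces $q\cdot m=q\cdot m'$ in $Q$. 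Phrased through congruences, an $M$-set congruence on $f^{*}(Y)$ is literally the same data as an $N$-set congruence, because the two actions differ only by the surjection $\phi$.

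I expect the whole argument to be formal once the right characterization is fixed; the one point that genuinely uses surjectivity --- rather than mere well-definedness of $\phi$ --- is closure under quotients, where surjectivity guarantees that every $M$-equivariant congruence on a restricted action is automatically $N$-equivariant. It is worth noting that none of the specific relations defining $M$ play any role here beyond making $\phi$ a surjective homomorphism; those relations will instead be what drives the later failure of local connectedness.
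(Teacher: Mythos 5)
Your proof is correct and rests on exactly the same input as the paper's: the paper's entire proof is a citation of Example A4.6.9 in Johnstone's \emph{Sketches of an Elephant}, which states that a surjective monoid (or more generally, category) homomorphism induces a hyperconnected morphism, and your argument is precisely a hands-on verification of that example via the standard criterion (inverse image full and faithful with image closed under subobjects and quotients). The only difference is that you supply the details the paper delegates to the reference; everything you wrote checks out, including the observation that surjectivity of $\phi$ is the only property of $M$, $N$, $\phi$ that is used.
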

\begin{proof}
This follows from $\phi$ being surjective, see \cite[Example A.4.6.9]{Ele}.
\end{proof}

\begin{proposition}
The geometric morphism $f$ is local.
\end{proposition}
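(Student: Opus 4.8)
The plan is to prove that $f$ is local by establishing the standard characterization: a geometric morphism is local precisely when its direct image functor $f_*$ admits a right adjoint $f^!$, equivalently when $f_*$ is fully faithful and the inverse image $f^*$ has the property that the unit of $(f^* \dashv f_*)$ is an isomorphism together with $f_*$ preserving a terminal object in the right way. Concretely, I would use the criterion that $f$ is local if and only if $f_*$ has a further right adjoint $f^!$, so my goal is to exhibit such a right adjoint to $f_*$ and verify the adjunction. Since we are working with presheaf toposes on monoids, everything can be phrased in terms of explicit set-with-action constructions.

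First I would recall from the description in the excerpt that $f_*(Y) = \HOM_M(N,Y)$, where $N$ carries the right $M$-action $n \cdot m = n\phi(m)$. To find a right adjoint $f^!$ to $f_*$, I would look for a functor $\Setswith{M} \to \Setswith{N}$ right adjoint to this Hom-functor; by general abstract nonsense about such tensor-Hom situations, a right adjoint to $\HOM_M(N,-)$ should again be of Hom type, so I would propose $f^!(X) \simeq \HOM_{N}(P, X)$ for a suitable $(N,M)$-biset $P$, or alternatively analyse directly what it means for a natural bijection $\Hom_{\Setswith{N}}(Y, f^!(X)) \cong \Hom_{\Setswith{M}}(f_*(Y)^{\text{?}}, X)$ to hold. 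A cleaner route, which I expect to be the most economical, is to observe that $N = \{1,a,a^2,\dots\}$ is the free monoid on one generator and to exploit the very concrete structure of $M$: because every element of $M$ is $x^n$ or $ex^n$, I can compute $f_*(Y) = \HOM_M(N,Y)$ explicitly as a set with $N$-action and then look for the adjoint by hand.

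The key steps, in order, are: (1) recall the abstract characterization that $f$ is local iff $f_*$ has a right adjoint $f^!$ (equivalently iff $\Setswith{N}$ has, via $f_*$, a right adjoint, and this adjoint preserves the terminal object appropriately); (2) identify the candidate right adjoint $f^!$ explicitly, most plausibly as a Hom-set functor $f^!(X) = \HOM_M(L, X)$ for an appropriate $(M,N)$-biset $L$ built from the adjunction data, so that $f_* \dashv f^!$ becomes an instance of the standard tensor-Hom adjunction; (3) verify the required natural isomorphism $\Hom(Y, f^!(X)) \cong \Hom(f_*(Y), X)$ by a direct computation using the explicit actions; and (4) confirm that $f_*$ is fully faithful (which, together with the existence of $f^!$, is automatic once we know $f$ is a geometric morphism whose $f^*$ is the restriction functor along the surjection $\phi$), thereby concluding locality. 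Throughout, I would lean on the background from \cite[Subsection 1.2]{hr1} for the formal properties of $\otimes$ and $\HOM$ in the monoid-action setting.

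The main obstacle I anticipate is step (2): correctly identifying the biset that represents $f^!$ and checking that $\HOM_M(N,-)$ really does admit a right adjoint at all. In general $f_*$ need not have a right adjoint, so the crux is to use the specific combinatorics of $M$ and $\phi$ — in particular the relation $xe = x$ and the idempotent $e$ — to produce enough structure for the adjoint to exist and to make the unit and counit triangle identities hold. I expect that the relation $xe=x$ is exactly what forces the right $M$-action on $N$ to behave well enough (essentially collapsing the ambiguity introduced by $e$) so that $\HOM_M(N,-)$ becomes a right adjoint, and verifying this compatibility is where the real content of the proof lies.
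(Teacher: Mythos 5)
Your overall framing is right: locality here amounts to (i) $f_*$ having a right adjoint and (ii) $f$ being connected, and you correctly sense that the idempotent $e$ and the relation $xe=x$ are what make (i) work. But the proposal stops exactly where the proof has to begin. Your step (2) --- ``correctly identifying the biset that represents $f^!$ and checking that $\HOM_M(N,-)$ really does admit a right adjoint at all'' --- is flagged by you as the anticipated obstacle and is never resolved; yet that step \emph{is} the proposition. As written, the proposal contains no argument that the right adjoint exists, only a plan to look for one.

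The missing idea is concrete and small: the map $\sigma : N \to M$, $\sigma(a^n) = ex^n$, is a morphism of right $M$-sets (this uses $xe=x$ and $e^2=e$) and is a section of $\phi : M \to N$, so $N$ is a retract of the representable right $M$-set $M$. Hence $N$ is an indecomposable projective right $M$-set, so $\HOM_M(N,-)$ preserves colimits, and the Special Adjoint Functor Theorem produces the right adjoint $f^!$ without your ever having to write it down --- this is the paper's route, and it sidesteps your step (3) entirely. If you insist on an explicit $f^!$, the same retract tells you $\HOM_M(N,Y) \cong Ye$, from which a tensor--Hom description of the adjoint can be extracted, but you would still need to supply $\sigma$ first; nothing in your proposal produces it. A secondary slip: in step (4) you ask for $f_*$ to be fully faithful, but connectedness of $f$ is full faithfulness of $f^*$; in the paper this is simply inherited from hyperconnectedness (Proposition \ref{prop:hyperconnected}) rather than argued separately.
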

\begin{proof}
First, we claim that $f_*$ has a right adjoint. By the Special Adjoint Functor Theorem, it is enough to show that $f_*$ preserves colimits. This is equivalent to the functor $\HOM_M(N,-) : \Setswith{M} \to \Set$ preserving colimits, since colimits in $\Setswith{N}$ are computed on underlying sets. By \cite[Lemma 4.1]{TDMA} this is equivalent to $N$ being an indecomposable projective right $M$-set. To see that $N$ is an indecomposable projective right $M$-set, consider the map $\phi : M \to N$. It is a morphism of right $M$-sets, and it has a section $\sigma : N \to M$ defined by $\sigma(a^n) = ex^n$. The function $\sigma$ is a morphism of right $M$-sets, so $N$ is a retract of $M$. Since $M$ is indecomposable projective, and $N$ is a retract of $M$, we have that $N$ is indecomposable projective as well.

In order for $f$ to be local, we additionally need that $f$ is connected. But this follows from $f$ being hyperconnected, see Proposition \ref{prop:hyperconnected}.
\end{proof}

Suppose that $f$ is locally connected. Then it follows from the above and \cite[Proposition 3.5]{johnstone-plc} that $f_!$ preserves binary products. We will show that this is not the case, and as a result $f$ is not locally connected.

\begin{proposition} \label{prop:does-not-preserve-binary-products}
The functor $f_!$ does not preserve binary products.
\end{proposition}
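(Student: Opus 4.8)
The plan is to test $f_!$ on the product of two copies of the representable object $M$ (the right regular $M$-set) and to show that the canonical comparison morphism fails to be an isomorphism. First I would record that $f_!(M) = M \otimes_M N \cong N$, the right regular $N$-set, via $m \otimes n \mapsto \phi(m)n$, so that $f_!(M) \times f_!(M) \cong N \times N$ (products in $\Setswith{N}$ being computed on underlying sets). Unwinding the two projections $M \times M \to M$, the canonical comparison morphism
\[ c : f_!(M \times M) \longrightarrow f_!(M) \times f_!(M) \cong N \times N \]
is then given on elements by $c\bigl([(p,q) \otimes a^k]\bigr) = (\phi(p)a^k,\, \phi(q)a^k)$. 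Since $f_!$ preserves the product $M \times M$ precisely when $c$ is an isomorphism, it suffices to prove that $c$ is not a bijection.

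Surjectivity of $c$ is immediate: an arbitrary element $(a^i, a^j)$ is the image of $[(x^i, x^j) \otimes 1]$. The real content is non-injectivity, which I would establish by inspecting a single, conveniently small fibre, namely $c^{-1}(a, 1)$. The decisive point is that $c$, being induced by functoriality, is well defined on $\otimes$-classes, so its value is constant on each class; consequently every zig-zag of defining relations connecting two elements of a fibre stays inside that fibre. Now the preimage of $(a,1)$ inside $M \times M \times N$ forces the $N$-component to be $a^0 = 1$, together with $\phi(q) = 1$ and $\phi(p) = a$, so it is the finite set $\{(p, q, 1) : p \in \{x, ex\},\ q \in \{1, e\}\}$.

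This is the crux of the argument, and where I expect the main obstacle to lie: one must control the equivalence relation defining $X \otimes_M N$ well enough to be sure that the two candidate elements are genuinely distinct, not merely distinct-looking. The confinement to the fibre resolves this, because for a single relation $((pm,qm)\otimes a^k) \sim ((p,q)\otimes a^{k + d})$ (with $\phi(m) = a^d$) to have both endpoints in this fibre, comparing $N$-components forces $d = 0$, i.e.\ $m \in \{1,e\}$. Thus only the degree-zero relation $((pe, qe)\otimes 1) \sim ((p,q)\otimes 1)$ can occur, and since right multiplication by the idempotent $e$ fixes $x$ and $ex$ but sends $1$ to $e$, it identifies $(x,1,1)$ with $(x,e,1)$ and $(ex,1,1)$ with $(ex,e,1)$, and nothing further. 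Hence $c^{-1}(a,1)$ consists of exactly the two distinct classes $[(x,1)\otimes 1]$ and $[(ex,1)\otimes 1]$, so $c$ is not injective. Therefore $c$ is not an isomorphism, $f_!$ does not preserve the product $M \times M$, and the proposition follows.
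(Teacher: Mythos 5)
Your proof is correct, but it takes a genuinely different route from the paper's. You test $f_!$ on the product of two copies of the representable $M$, whereas the paper uses $M \times f^*(N)$; more significantly, your method for showing $(x,1)\otimes 1 \neq (ex,1)\otimes 1$ is different. You argue directly with the defining relations of the tensor product: since the comparison map is constant on generating relations, any zig-zag joining two elements of a fibre stays in that fibre, and the fibre over $(a,1)$ is small enough (four elements of $M \times M \times N$, with only the $m=e$ relations acting inside it) to enumerate its equivalence classes by hand, finding exactly two. The paper instead decomposes its test object $M \times N$ into a coproduct of sub-$M$-sets $A' \sqcup A'' \sqcup B$ with $(x,1) \in A'$ and $(ex,1) \in A''$, and invokes preservation of coproducts by $f_!$ to conclude the two tensors land in disjoint summands. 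Your fibre-confinement argument is more elementary and self-contained --- it needs nothing beyond the construction of $X \otimes_M N$ as a quotient --- and it sidesteps having to guess a component decomposition, which for $M \times M$ would be less transparent than for $M \times N$. The paper's argument buys a cleaner conceptual picture (the two elements lie in different ``connected components,'' which is exactly the structure a locally connected $f_!$ would have to respect) at the cost of having to exhibit and verify the decomposition. One small point of care in your write-up: the confinement claim should be (and implicitly is) applied to the lift of $c$ to $(M \times M) \times N$ before passing to the quotient, since that is where the generating relations live; as stated it reads as a property of $c$ on classes, but your subsequent use of it is the correct one.
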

\begin{proof}
Consider the right $M$-sets $M$ and $N$, with the right $M$-action on $M$ given by multiplication, and the right $M$-action on $N$ given by $n \cdot m = n\phi(m)$. We claim that the natural comparison map
\begin{equation*}
f_!(M \times N) \longrightarrow f_!(M) \times f_!(N)
\end{equation*}
is not injective. For a general right $M$-set $X$, we have $f_!(X) \simeq X \otimes_M N$, where the left $M$-action on $N$ is given by $m \cdot n = \phi(m)n$. The right $N$-action on $X \otimes_M N$ is given by $(x \otimes n) \cdot n' = x \otimes nn'$. In particular, $f_!(M) \simeq M \otimes_M N \simeq N$. Further, the map $N \otimes_M N \to N$, $n \otimes n' \mapsto nn'$ is an isomorphism, so $f_!(N) \simeq N$ as well.

Using the above, we can rewrite the comparison map as
\begin{equation*}
\begin{split}
\alpha : (M \times N) \otimes_M N \longrightarrow N \times N \\
(m,n) \otimes n' \mapsto (\phi(m)n',nn')
\end{split}
\end{equation*}
Note that $\alpha((x,1)\otimes 1) = (a,1) = \alpha((ex,1)\otimes 1)$. We will show that $(x,1)\otimes 1 \neq (ex,1)\otimes 1$, which implies that $\alpha$ is not injective. First we show that $(x,1)$ and $(ex,1)$ are in different components of $M \times N$. Write:
\begin{align*}
& A' = \{ (x,1), (x^2,a), (x^3,a^2), (x^4,a^3), \dots  \} \\
& A'' = \{ (ex,1), (ex^2,a), (ex^3,a^2), (ex^4,a^3), \dots \} \\
& B = \{ (m,n) \in M \times N : \phi(m) \neq an \}
\end{align*}
It can be checked directly that $A'$ and $A''$ are sub-$M$-sets of $M \times N$. From cancellativity of $N$ it follows that $B \subseteq M \times N$ is a sub-$M$-set as well. So we can write $M \times N$ as a direct sum (coproduct) of right $M$-sets 
\begin{equation*}
M \times N = A' \sqcup A'' \sqcup B. 
\end{equation*}
We have $(x,1) \in A'$ and $(ex,1) \in A''$. Because $f_!$ preserves colimits, we find that
\begin{equation*}
(M \times N) \otimes_M N \simeq f_!(M \times N) \simeq f_!(A') \sqcup f_!(A'') \sqcup f_!(B).
\end{equation*}
Since $(x,1)\otimes 1 \in f_!(A')$ and $(ex,1) \otimes 1 \in f_!(A'')$, we have that $(x,1)\otimes 1 \neq (ex,1) \otimes 1$. As a result, $\alpha$ is not injective.
\end{proof}

From the discussion before Proposition \ref{prop:does-not-preserve-binary-products} it now follows that:

\begin{corollary}
The geometric morphism $f$ is not locally connected.
\end{corollary}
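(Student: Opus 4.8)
The plan is to prove the Corollary by contradiction, leveraging the logical chain already assembled in the discussion preceding Proposition \ref{prop:does-not-preserve-binary-products}. By this point two facts are in hand: that $f$ is local, and that its leftmost adjoint $f_!$ fails to preserve binary products (Proposition \ref{prop:does-not-preserve-binary-products}). The only remaining ingredient is the implication linking local connectedness to product preservation for a morphism already known to be local.

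Concretely, I would begin by assuming, for contradiction, that $f$ is locally connected. Combining this assumption with the established locality of $f$, I would invoke \cite[Proposition 3.5]{johnstone-plc}, which asserts that a geometric morphism that is simultaneously local and locally connected has the property that $f_!$ preserves binary products. This directly contradicts Proposition \ref{prop:does-not-preserve-binary-products}, so the assumption is untenable and $f$ cannot be locally connected.

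The substantive mathematical work has already been carried out in Proposition \ref{prop:does-not-preserve-binary-products}, where the failure of product preservation is witnessed by the explicit non-injective comparison map $\alpha$; the Corollary itself is therefore a purely formal deduction. The one point that warrants care is verifying that the hypotheses of \cite[Proposition 3.5]{johnstone-plc} are genuinely met, i.e.\ confirming that our $f$ qualifies as local in precisely the sense required by that reference and that its notion of local connectedness matches the one under consideration. Since locality was proven directly above and these notions are standard, I expect no real obstacle in the final step; the difficulty of the whole argument resides entirely in the construction of the monoid $M$ and the component decomposition $M \times N = A' \sqcup A'' \sqcup B$ exploited in the Proposition, not in this concluding deduction.
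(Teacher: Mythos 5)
Your proposal is correct and is precisely the argument the paper itself gives: the preceding discussion assumes $f$ locally connected, invokes \cite[Proposition 3.5]{johnstone-plc} together with the established locality (and connectedness) of $f$ to conclude that $f_!$ would preserve binary products, and then Proposition \ref{prop:does-not-preserve-binary-products} supplies the contradiction. No difference in route or substance.
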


\bibliographystyle{amsplainarxiv}
\bibliography{../monoidprops}

\end{document}